\documentclass[conference]{IEEEtran}
\IEEEoverridecommandlockouts
\usepackage{cite}
\usepackage{amsmath,amssymb,amsfonts,amsthm}
\usepackage{hyperref}
\usepackage{graphicx}
\graphicspath{{figures/}}
\usepackage{textcomp}
\usepackage[table]{xcolor}
\def\BibTeX{{\rm B\kern-.05em{\sc i\kern-.025em b}\kern-.08em
    T\kern-.1667em\lower.7ex\hbox{E}\kern-.125emX}}

    \newcommand\textopenone{\leavevmode\hbox{\small 1\kern-3.3pt\normalsize 1}}

\usepackage{algorithm}
\usepackage{algpseudocode}
\usepackage{subcaption}
\usepackage{pifont}   
\usepackage{comment}
\usepackage{lineno}
\usepackage[capitalise]{cleveref}

\newtheorem{theorem}{Theorem}

\newcommand{\LineComment}[1]{\Statex \hfill $\triangleright$ \textit{#1}}

\newcommand{\nn}{n}
\newcommand{\nm}{m}

\newcommand{\real}{\mathbb{R}}

\newcommand{\rnn}{{\real}^{\nn \times \nn}}
\newcommand{\rmn}{{\real}^{\nm \times \nn}}

\newcommand\inv{^{-1}}

\newcommand{\ma}{\boldsymbol{A}}

\newcommand{\md}{\boldsymbol{D}}
\newcommand{\me}{\boldsymbol{E}}
\newcommand{\mf}{\boldsymbol{F}}
\newcommand{\mg}{\boldsymbol{G}}

\newcommand{\mi}{\boldsymbol{I}}

\newcommand{\mq}{\boldsymbol{Q}}
\newcommand{\mr}{\boldsymbol{R}}
\newcommand{\ms}{\boldsymbol{S}}

\newcommand{\mU}{\boldsymbol{U}}
\newcommand{\mv}{\boldsymbol{V}}

\newcommand{\vu}{\mathbf{u}}

\DeclareMathOperator{\rank}{\mathrm{rank}}

\begin{document}
\setlength{\linenumbersep}{3pt}

\title{Numerically Stable Cholesky-QR on GPU via Mixed-Precision Randomized Preconditioning
}

\author{
 \IEEEauthorblockN{James E. Garrison}
\IEEEauthorblockA{\textit{Dept. of Mathematics} \\
\textit{North Carolina State University}\\
Raleigh, NC, USA \\
jegarri3@ncsu.edu}
\and
\IEEEauthorblockN{Chao Chen}
\IEEEauthorblockA{\textit{Dept. of Mathematics} \\
\textit{North Carolina State University}\\
Raleigh, NC, USA \\
cchen49@ncsu.edu}
\and
\IEEEauthorblockN{Ilse C. F. Ipsen}
\IEEEauthorblockA{\textit{Dept. of Mathematics} \\
\textit{North Carolina State University}\\
Raleigh, NC, USA \\
ipsen@ncsu.edu}
}

\maketitle
\begin{abstract}
Cholesky-QR is among the fastest algorithms for computing the thin QR
factorization of tall-and-skinny matrices on GPUs, relying entirely on
BLAS-3 operations. However, it is numerically unstable: forming the
Gram matrix squares the condition number, causing breakdown when
$\kappa_2(\boldsymbol{A}) \gtrsim 10^8$. We present \texttt{MRCQR} (Mixed-Precision
Randomized Cholesky-QR), a stable GPU algorithm that addresses this
limitation. \texttt{MRCQR} uses a subsampled randomized trigonometric
transform to construct a preconditioner $\boldsymbol{R}_s$ that reduces
$\kappa_2(\boldsymbol{A}\boldsymbol{R}_s^{-1})$ to near unity with high probability, then
applies Cholesky-QR in double precision to the preconditioned matrix.
The key insight---supported by perturbation analysis---is that the
preconditioner requires far less accuracy than the final result:
single (FP32) precision suffices when $\kappa_2(\boldsymbol{A}) \lesssim 10^8$,
and half (FP16) when $\kappa_2(\boldsymbol{A}) \lesssim 10^4$.
\texttt{MRCQR} produces an explicit orthogonal factor $\widehat{\boldsymbol{Q}}$
satisfying $\|\boldsymbol{I} - \widehat{\boldsymbol{Q}}^\top\widehat{\boldsymbol{Q}}\|_2 = \cal O(\mathbf{u})$
($\mathbf{u} \approx 10^{-16}$, double-precision unit roundoff)
for condition numbers up to $10^{16}$, far beyond the $10^8$ limit of
\texttt{CholQR2}. Experiments on an NVIDIA H100 GPU show that
\texttt{MRCQR} (FP16) outperforms \texttt{rand-cholQR}
by $1.4$--$1.8\times$ across all tested column counts and is
$1.8$--$13.5\times$ faster than cuSOLVER \texttt{geqrf}, while the
FP16 sketch (used when $\kappa_2(\boldsymbol{A}) \lesssim 10^4$) is $2\times$
cheaper than FP64 at no accuracy cost.

\end{abstract}

\begin{IEEEkeywords}
Mixed Precision, Tall-Skinny QR, Randomized Preconditioning, GPU Computing
\end{IEEEkeywords}

\section{Introduction}


Given a matrix $\ma\in\rmn$ with $m\gg n$ and $\rank(\ma)=n$, we
are interested in computing the thin QR factorization
\[\ma=\mq \mr, \quad \mq^\top \mq = \mi_{n},\]
where $\mq\in\rmn$ has orthonormal columns and $\mr\in\rnn$ is upper
triangular. In this paper we target double-precision output, though
the same principle generalizes to other working precisions. Such computation is required for dimension reduction
techniques~\cite{constantine2011tall}, the solution of linear least
squares problems~\cite{BjorckBook1996}, and inner loops of Krylov
space methods~\cite{Saad2003}, among others.

In several of these settings---particularly Krylov subspace methods
and randomized matrix approximation---$\mq$ must be formed
\emph{explicitly} as a dense matrix, since it serves directly as an
orthonormal basis for subsequent computations.
The orthogonality of this explicit $\mq$ is critical: in Krylov
subspace methods, for instance, loss of orthogonality among basis
vectors causes spurious eigenvalues and premature stagnation of
convergence~\cite{Saad2003}.
Moreover, many of these applications require the factorization to be
performed \emph{repeatedly}---the block Arnoldi process orthogonalizes
a new block of vectors at each iteration~\cite{Saad2003}, and
randomized SVD via power iteration applies a QR factorization at
each of its power steps~\cite{HMT11}---so per-call cost
accumulates directly.
Together, these demands call for a QR routine that produces a
numerically orthonormal explicit $\mq$ and is fast enough for
repeated use on a GPU.



\subsection{Background on Householder- and Cholesky-QR Methods}

The performance of QR factorization on cache-based architectures has long been driven by maximizing BLAS-3 usage: the blocked \texttt{geqrf} algorithm casts the trailing matrix updates into Level-3 BLAS, while the panel factorization remains a BLAS-2 bottleneck~\cite[Section 5.2]{GovL13}. On modern GPUs, where the throughput gap between BLAS-3 and BLAS-2 operations is far more pronounced, this panel bottleneck becomes the dominant performance limiter, motivating algorithms that eliminate it entirely. In practice, GPU users typically rely on cuSOLVER's \texttt{geqrf}/\texttt{orgqr}~\cite{cuSOLVER}, which carry this same bottleneck onto the GPU. While numerically stable, this approach underutilizes GPU parallelism for tall-and-skinny matrices where $n \ll m$.

The Tall-Skinny QR (TSQR) algorithm~\cite{DGHL08} improves on this by organizing the computation as a tree of local Householder QR factorizations, achieving communication-optimality and numerical stability. However, TSQR has further practical limitations on GPU hardware: it is not exposed in vendor libraries such as cuSOLVER, and forming an explicit $\mq$ requires an additional backward pass through the tree, increasing data movement. In practice, TSQR is frequently slower than simpler alternatives on GPUs~\cite{FukayaTSQR}, and even authors of optimized GPU implementations do not recommend it for matrices with more than approximately 32 columns on Nvidia H100 GPU due to its limited amount of shared memory~\cite{thies2026implementationqrfactorizationtall}.

A communication-avoiding, more GPU-friendly approach is Cholesky-QR, which replaces
Householder reflections with BLAS-3 matrix operations that map
efficiently onto GPU architectures. We present the basic Cholesky-QR
algorithm as \cref{alg:cholqr}. It reduces the problem to just three vendor-library calls: a
single matrix multiplication forming the (small) Gram matrix
$\ma^\top\ma$, followed by a small Cholesky factorization and a
triangular solve.
For tall-and-skinny matrices with $n\ll m$, the dominant cost is the
Gram matrix formation (step~1) and the triangular solve (step~3).
The former is a compute-bound GEMM that maps well onto GPU tensor
cores~\cite{StathWu2002}. The triangular solve is formally BLAS-3,
but for small $n$ its sequential dependency structure makes it
memory-bandwidth bound in practice, limiting GPU utilization.
Despite this bottleneck, Cholesky-QR remains highly competitive when
communication dominates arithmetic~\cite{Yama2015}, a common regime
in large-scale GPU computing.

\begin{algorithm}
\caption{ Cholesky-QR (\texttt{CholQR})}\label{alg:cholqr}
\begin{algorithmic}[1]
\Require $\ma\in\rmn$ with $ m\gg n$ and $\rank(\ma)=n$
\Ensure  $\mq \in\rmn$ and upper triangular $\mr \in\rnn$
\State $\mg \gets \ma^\top\ma$ \Comment{(Small) Gram matrix}
\State Compute $\mr$ s.t.\ $\mg=\mr^\top\mr$ \Comment{Cholesky factorization}
\State $\mq \gets \ma\mr^{-1}$ \Comment{Triangular solve}
\end{algorithmic}
\end{algorithm}

 In exact arithmetic, it is straightforward to verify that the output of \cref{alg:cholqr} $\mq = \ma\mr^{-1}$ satisfies $\mq^\top \mq = \mr^{-\top}\ma^\top\ma\mr^{-1} = \mr^{-\top}\mr^\top\mr\mr^{-1} = \mi_n$, i.e., the columns of $\mq$ are exactly orthonormal. However, in floating-point arithmetic, forming $\ma^\top\ma$ squares the condition number of $\ma$, and thus the computed $\widehat{\mq}$ can deviate significantly from orthonormality by $\mathcal{O}(\kappa_2(\ma)^2\,\vu)$~\cite{YNYF2015}, where $\kappa_2(\ma)$ is the ratio between the largest and the smallest singular values of $\ma$ and $\vu = 2^{-53} \approx 1.1 \cdot 10^{-16}$ is the double-precision unit roundoff~\cite{IEEE}. More critically, if $\kappa_2(\ma) \gtrsim \vu^{-1/2} \approx 10^8$, this squaring effect causes the computed Gram matrix to fail to be positive definite, so the Cholesky factorization breaks down entirely~\cite{Higham2002}. 
 
 The standard remedy, \texttt{CholQR2}~\cite{YNYF2015}, applies
\cref{alg:cholqr} twice to correct the orthogonality error from
$\mathcal{O}(\kappa_2(\ma)^2\vu)$ to $\mathcal{O}(\vu)$, but it
shares the same stability limit: both methods fail when
$\kappa_2(\ma) \gtrsim 10^8$, since the first pass requires the Gram
matrix to be positive definite. Shifted \texttt{CholQR3}~\cite{FKNYY2020} adds a stabilizing shift
to the Gram matrix, extending stability to
$\kappa_2(\ma) \lesssim 10^{16}$, but at approximately 50\% higher
cost than \texttt{CholQR2}. Randomized preconditioning
approaches~\cite{FanGuo2021,Balabanov2022,HSBY2023,MBMDML24} achieve
the same stability range by replacing the extra passes in \texttt{CholQR2} with an
inexpensive sketch, at roughly the same cost as \texttt{CholQR2}.
In particular, \texttt{rand-cholQR}~\cite{HSBY2023} uses a two-level
sparse-plus-dense multisketch and is the strongest prior randomized
baseline.  The authors of ~\cite{chen2025gpuparallelizablerandomizedsketchandpreconditionlinear} give a thorough benchmarking of a GPU-based sketch-and-precondition least squares solver for tall and skinny matrices based on a sparse sign embedding.  Our proposed method builds on these randomized approaches by
additionally exploiting mixed precision in the sketch phase to reduce
cost without sacrificing stability or accuracy.
The idea of computing a randomized preconditioner in reduced precision
has been explored for iterative solvers: Georgiou et
al.~\cite{georgiou2023lsqr} show GPU speedups for \texttt{LSQR}, and
Carson and Dau\v{z}ickaite~\cite{carson2024mixed} analyze
mixed-precision sketching for \texttt{GMRES}-based iterative
refinement.

\texttt{MRCQR} targets a different setting---direct QR
factorization with explicit $\mq$ formation---where the accuracy
metric is orthogonality rather than iterative convergence, and the
dominant cost after preconditioning is triangular solves and Gram
matrix formation on the full tall-and-skinny matrix.

\subsection{Our Approach and Contributions}
\label{sec:contributions}

Our proposed method, \textit{Mixed Precision Randomized Preconditioned
Cholesky-QR} (\texttt{MRCQR}), constructs an upper triangular preconditioner $\mr_s$
such that $\ma\mr_s^{-1}$ is well-conditioned, then applies a single
pass of \cref{alg:cholqr} to produce an explicit $\widehat{\mq}$ satisfying
\begin{equation}
\label{eq:orth}
\|\mi - \widehat{\mq}^\top\widehat{\mq}\|_2 = \mathcal{O}(\vu)
\end{equation}
 Specifically, $\mr_s$ is the $R$-factor
from the QR decomposition of $\mathbf{\Omega}\ma \in \real^{c \times
n}$ with $n \leq c \ll m$, where $\mathbf{\Omega}$ is a randomized sketching
matrix~\cite{HMT11}. This preconditioning idea was introduced for
least-squares by Rokhlin and Tygert~\cite{RT08,Blendenpik} and has been applied
to \texttt{CholQR} in prior
work~\cite{FanGuo2021,Balabanov2022,HSBY2023,MBMDML24}; the novelty
of \texttt{MRCQR} lies in exploiting mixed precision for the sketch.

\textbf{Contribution 1: mixed-precision sketch.} The main observation of this work is that $\mr_s$ need not be computed in full precision when $\kappa_2(\ma)$ is modest, without sacrificing any final accuracy. Targeting double-precision orthogonality $10^{-16}$ in \cref{eq:orth}, our perturbation analysis in \cref{rscond} establishes that:
\begin{itemize}
\item single precision suffices to construct $\mr_s$ when $\kappa_2(\ma) \lesssim 10^{8}$, and
\item half precision suffices when $\kappa_2(\ma) \lesssim 10^{4}$.
\end{itemize}
Given a coarse estimate of $\kappa_2(\ma)$, one can therefore select the lowest viable precision for the sketch, reducing both computation and memory traffic with no loss in final orthogonality. This observation is general and can be applied to improve the performance of other preconditioned \texttt{CholQR} methods.

\textbf{Contribution 2: extended numerical stability.}
\texttt{MRCQR} achieves near-machine-precision orthogonality
$\|\mi - \widehat{\mq}^\top\widehat{\mq}\|_2 = \mathcal{O}(\vu)$ for
$\kappa_2(\ma)$ up to $10^{16}$---matching the stability range of
\texttt{CholQR3}~\cite{FKNYY2020} but without its 50\% cost overhead,
and far beyond the $10^8$ limit shared by \texttt{CholQR} and
\texttt{CholQR2}~\cite{YNYF2015}. \texttt{MRCQR} remains
backward-stable across all tested condition numbers. Experiments
sweeping $\kappa_2(\ma) \in \{10^2, \ldots, 10^{16}\}$ confirm the
theoretical precision thresholds: the single-precision sketch maintains
$\mathcal{O}(\vu)$ orthogonality for $\kappa_2(\ma) \lesssim 10^8$
and the half-precision sketch for $\kappa_2(\ma) \lesssim 10^4$.

\textbf{Contribution 3: efficient GPU implementation.}
\texttt{MRCQR} is built entirely on vendor-optimized libraries
(\texttt{cuFFT}, \texttt{cuBLAS}, \texttt{cuSOLVER}~\cite{cuSOLVER}),
requiring no custom GPU kernels and making it immediately deployable
on any CUDA device. Profiling on the NVIDIA H100 reveals that the two
triangular solves (TRSM) dominate runtime (56--74\% for
$n = 16$--$128$) while the sketch phase accounts for only
10--20\%---the only component that changes with precision. Switching
from double to half-precision sketch reduces sketch cost by
${\approx}2\times$ at no accuracy cost, and \texttt{MRCQR} (FP16)
outperforms \texttt{rand-cholQR}~\cite{HSBY2023} by $1.4$--$1.8\times$
and cuSOLVER \texttt{geqrf} by $1.8$--$13.5\times$.

\section{Our Mixed-Precision Algorithm: \texttt{MRCQR}}\label{sec:MRCQR}

\texttt{MRCQR} constructs an upper triangular preconditioner $\mr_s$
--- the $R$-factor from the thin QR decomposition of $\mathbf{\Omega}\ma
\in \real^{c \times n}$ with $n \leq c \ll m$ --- such that
$\kappa_2(\ma\mr_s^{-1}) = \mathcal{O}(1)$ with high
probability~\cite{Blendenpik,IpsW12}. Applying
\texttt{CholQR} to the preconditioned matrix then yields
$\|\mi - \widehat{\mq}^\top\widehat{\mq}\|_2 = \mathcal{O}(\vu)$~\cite{garrisonipsen}.

\begin{algorithm}
\caption{\texttt{MRCQR}: Mixed-precision, Randomized-preconditioned Cholesky-QR}
\label{alg:mrcqr}
\begin{algorithmic}[1]
\Require $\ma\in\rmn$ with $ m\gg n$ and $\rank(\ma)=n$;
         sketch size $c$ (default $c=3n$~\cite{garrisonipsen});
         sketch precision $p \in \{\texttt{half},\, \texttt{single},\, \texttt{double}\}$
\Ensure  $\mq \in\rmn$ and upper triangular $\mr \in\rnn$

\Statex \medskip\textbf{Stage 1: Compute preconditioner in precision $p$}

\State Draw random Rademacher diagonal $\md \in \{+1,-1\}^m$
       and row indices $\mathcal{S} \subset \{1,\ldots,\lfloor m/2\rfloor+1\}$,
       $|\mathcal{S}| = c$
       \LineComment{Only $\lfloor m/2\rfloor+1$ unique bins due to Hermitian symmetry of real FFT}

\Statex
\State \textbf{[DiagMult]}~$\tilde{\ma} \gets \mathrm{fl}_p(\md \cdot \ma)$
\LineComment{Sign flip + cast $\ma$ to precision $p$}

\Statex
\State \textbf{[FFT]}~$\hat{\ma} \gets \mathrm{FFT}(\tilde{\ma})$
\LineComment{Column-wise real-to-complex FFT in precision $p$}

\Statex
\State \textbf{[Select]}~$\ma_s \gets \sqrt{c/m}\cdot\mathrm{Re}(\hat{\ma})[\mathcal{S},\,:]$
\LineComment{Sample $c$ rows; take real parts}

\Statex
\State \textbf{[SketchQR]}~$[\mq_s, \mr_s] \gets \textit{thinQR}(\ma_s)$
\LineComment{QR in precision $p$}

\Statex
\State Promote $\mr_s$ to double precision

\Statex \medskip\textbf{Stage 2: \texttt{CholQR} in double precision}

\State $\ma_1 \gets \ma\mr_s^{-1}$
\Comment{$\ma_1$ is well-conditioned}

\State $[\mq,\mr_2] \gets \texttt{CholQR}(\ma_1)$ \Comment{\cref{alg:cholqr}}

\Statex \medskip\textbf{Stage 3: Recover $\mr$}

\State $\mr \gets \mr_2 \mr_s$
\Comment{Upper-triangular factor of $\ma$}

\end{algorithmic}
\end{algorithm}

\subsection{Algorithm Description}\label{sec:precond}

\texttt{MRCQR} uses the SRTT~\cite{Blendenpik} as the sketching
operator $\mathbf{\Omega} = \ms\mf\md$, where $\md$ is a diagonal
Rademacher matrix, $\mf$ applies column-wise FFTs, and $\ms$
subsamples $c$ rows. We now explain each implementation choice.

Applying $\md$ requires no floating-point multiplications: since $d_i
\in \{+1,-1\}$, the sign flip reduces to XOR-ing the IEEE-754 sign
bit (the MSB: bit~63 for FP64, bit~31 for FP32, bit~15 for FP16, using
IEEE-754 numbering from the most significant end) and is fused with the
precision cast in a single pass over $\ma$. The diagonal multiply achieves modest
speedups: since $\ma$ must always be read in its original double
precision (8 bytes per element) regardless of the output precision,
the read traffic is fixed and the bandwidth savings are
fundamentally limited.

We use the FFT rather than the Walsh--Hadamard Transform (WHT), despite
WHT requiring no floating-point multiplications. Our benchmarks show
WHT is $3\times$ slower than \texttt{cuFFT} at $m = 2^{20}$, an
advantage we attribute to \texttt{cuFFT}'s engineering maturity rather
than any algorithmic difference.
Since $\ma$ is real-valued, its $m$-point DFT satisfies the Hermitian
symmetry $\hat{A}[m-k] = \overline{\hat{A}[k]}$ for $k = 1, \ldots,
\lfloor m/2 \rfloor$, so only $\lfloor m/2\rfloor + 1$ unique
frequency bins exist per column. We select $c$ bins and collect their
real parts, forming a $c\times n$ real sketch $\ms$ with
$S_{ij} = \operatorname{Re}(\hat{A}[k_i,j])$.
An alternative is to retain both real and imaginary parts, 
yielding a $2c\times n$ sketch at no extra FFT
cost but doubling the sketch QR cost. Our experiments show that the
Re-only variant achieves double-precision orthogonality
$\|\widehat{\mq}^\top\widehat{\mq} - \mi\|_2 \approx \vu$ across
all tested condition numbers, while halving the sketch QR cost.
We use $c = 3n$ samples, giving a $3\times$ overdetermined sketch;
this choice is based on extensive empirical experiments
in~\cite{garrisonipsen}, which found it to 
reliably produces a well-conditioned preconditioner $\mr_s$
in practice.

For the small $n$ values considered here, sketch QR accounts for a
small fraction of total sketch time. Nevertheless, two limitations
are worth noting. First, \texttt{Sgeqrf} achieves only
${\sim}1.09\times$ speedup over \texttt{Dgeqrf}: cuSOLVER's blocked
Householder QR interleaves a sequential BLAS-2 panel factorization
that costs the same regardless of precision, creating an Amdahl
bottleneck. Second, cuSOLVER provides no half-precision
\texttt{geqrf}; the half-precision sketch variant therefore promotes
the sketch matrix to single precision and executes the same
\texttt{Sgeqrf} call, giving identical SketchQR times for both
precision levels.

In Stage~2, $\mr_s$ is promoted to double precision before the
triangular solve $\ma_1 \gets \ma\mr_s^{-1}$: performing this solve
in reduced precision would incur a backward error proportional to
$\vu_p$ on the full $m\times n$ matrix, degrading the orthogonality
achieved by the subsequent double-precision \texttt{CholQR}.
Within \texttt{CholQR}
we use \texttt{dgemm} rather than \texttt{dsyrk} for the Gram matrix:
profiling shows \texttt{dsyrk} significantly underperforms
\texttt{dgemm} for tall-and-skinny shapes on the Nvidia H100 GPU. Finally, the
triangular solve uses \texttt{dtrsm} directly rather than explicitly
forming $\mr_s^{-1}$ and applying \texttt{dgemm}: explicitly forming
$\mr_s^{-1}$ squares the condition number in the error bound,
potentially losing all significant digits when $\kappa_2(\mr_s)$ is
non-trivial.

\subsection{Mixed-Precision Justification}\label{sec:math}

The SRTT sketching operator ensures that $\kappa_2(\ma\mr_s^{-1}) =
\mathcal{O}(1)$ with high probability~\cite{Blendenpik,IpsW12}.
Here $\ma_1 \equiv \ma\mr_s^{-1}$ is the ideal preconditioned matrix
in exact arithmetic. In finite arithmetic, computing $\mr_s$ in
reduced precision produces a perturbed factor $\mr_s + \me$, so the
actual computed object is $\widehat{\ma_1} \equiv \ma(\mr_s+\me)^{-1}$.
\Cref{rscond} shows that $\kappa_2(\widehat{\ma_1})$ remains
$\mathcal{O}(1)$ provided the relative error $\varepsilon \equiv
\|\me\|_2/\|\mr_s\|_2$ from computing $\mr_s$ in reduced precision
satisfies $\varepsilon < \kappa_2(\mr_s)^{-1}$.

A standard floating-point analysis gives
$\varepsilon = \mathcal{O}(\vu_p)$ for computing $\mr_s$ in
precision $\vu_p$. Since $\kappa_2(\mr_s) \approx \kappa_2(\ma)$,
the condition $\varepsilon < \kappa_2(\mr_s)^{-1}$ becomes
$\vu_p \cdot \kappa_2(\ma) \lesssim 1$, directly yielding the precision
thresholds in \cref{alg:mrcqr}: single precision ($\vu_s \approx
10^{-8}$) is safe when $\kappa_2(\ma) \lesssim 10^8$, and half
precision ($\vu_h \approx 10^{-4}$) when $\kappa_2(\ma) \lesssim
10^4$.
Once $\widehat{\ma_1}$ is well-conditioned, the analysis
of~\cite[Corollary 3.2]{garrisonipsen} guarantees that applying
\texttt{CholQR} to $\widehat{\ma_1}$ achieves $\|\mi -
\widehat{\mq}^\top\widehat{\mq}\|_2 = \mathcal{O}(\vu)$, 
where $\widehat{\mq}$ is the output of \cref{alg:mrcqr}, completing
the theoretical justification for \texttt{MRCQR}.

We now state this precisely.

\begin{theorem}\label{rscond} 
Assume $\ma$ has full column rank and $\mr_s$ is non-singular.
Let $\ma_1\equiv\ma\mr_s^{-1}$,
$\widehat{\ma_1}\equiv \ma(\mr_s+\me)\inv$, and
$\varepsilon\equiv\frac{\|\me\|_2}{\|\mr_s\|_2}$.
If $\|\me\|_2\|\mr_s^{-1}\|_2<1$, then
 \[\kappa_2(\widehat{\ma_1})\leq\kappa_2(\ma_1)\>\frac{1+\varepsilon\ \kappa_2(\mr_s)}{1-\varepsilon\ \kappa_2(\mr_s)}. \]
\end{theorem}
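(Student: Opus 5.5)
The idea is to write $\widehat{\ma_1}$ as $\ma_1$ times a small perturbation of the identity, and then bound the extreme singular values of $\widehat{\ma_1}$ separately. Factor
\[
\mr_s+\me=(\mi+\me\mr_s^{-1})\,\mr_s ,
\]
so that, with $\mathbf{P}\equiv\me\mr_s^{-1}$,
\[
\widehat{\ma_1}=\ma\mr_s^{-1}(\mi+\mathbf{P})^{-1}=\ma_1(\mi+\mathbf{P})^{-1}.
\]
The key scalar is
\[
\|\mathbf{P}\|_2\le\|\me\|_2\|\mr_s^{-1}\|_2=\varepsilon\,\|\mr_s\|_2\|\mr_s^{-1}\|_2=\varepsilon\,\kappa_2(\mr_s),
\]
and the hypothesis of \cref{rscond} states exactly that this quantity is $<1$.

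Because $\|\mathbf{P}\|_2<1$, the matrix $\mi+\mathbf{P}$ is nonsingular (Neumann series), and hence so is $\mr_s+\me$, which makes $\widehat{\ma_1}$ well defined. The standard bounds then give
\[
\|(\mi+\mathbf{P})^{-1}\|_2\le\frac{1}{1-\varepsilon\kappa_2(\mr_s)},
\qquad
\|\mi+\mathbf{P}\|_2\le 1+\varepsilon\kappa_2(\mr_s).
\]

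Next, bound the extreme singular values of $\widehat{\ma_1}$, which has full column rank. For the largest one, submultiplicativity gives
\[
\sigma_{\max}(\widehat{\ma_1})\le\sigma_{\max}(\ma_1)\,\|(\mi+\mathbf{P})^{-1}\|_2 .
\]
For the smallest one, write $\ma_1=\widehat{\ma_1}(\mi+\mathbf{P})$. Then
\[
\sigma_{\min}(\ma_1)\le\sigma_{\min}(\widehat{\ma_1})\,\|\mi+\mathbf{P}\|_2 .
\]
To justify this, use $\sigma_{\min}(\mathbf{X}\mathbf{Y})\le\sigma_{\min}(\mathbf{X})\|\mathbf{Y}\|_2$ for square $\mathbf{Y}$. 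Equivalently, take the unit vector $x$ attaining $\sigma_{\min}$ for $\widehat{\ma_1}^\top$ and use $\|\ma_1^\top x\|\le\|(\mi+\mathbf{P})^\top\|\,\|\widehat{\ma_1}^\top x\|$. Rearranging,
\[
\sigma_{\min}(\widehat{\ma_1})\ge\frac{\sigma_{\min}(\ma_1)}{1+\varepsilon\kappa_2(\mr_s)} .
\]

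Dividing the two bounds gives the claimed inequality
\[
\kappa_2(\widehat{\ma_1})\le\kappa_2(\ma_1)\,\frac{1+\varepsilon\kappa_2(\mr_s)}{1-\varepsilon\kappa_2(\mr_s)} .
\]

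The main care point is the lower bound on $\sigma_{\min}$ for a rectangular matrix. I will state it through the min–max characterization $\sigma_{\min}(\mathbf{M})=\min_{\|y\|=1}\|\mathbf{M}y\|$ for full-column-rank $\mathbf{M}$. Then $\|\ma_1 y\|=\|\widehat{\ma_1}(\mi+\mathbf{P})y\|\ge\sigma_{\min}(\widehat{\ma_1})\,\|(\mi+\mathbf{P})y\|$. This only produces the inequality above after choosing $y=(\mi+\mathbf{P})^{-1}z/\|(\mi+\mathbf{P})^{-1}z\|$, where $z$ is the unit minimizer for $\widehat{\ma_1}$. With that choice, $\|\ma_1 y\|\le\sigma_{\min}(\widehat{\ma_1})/\|(\mi+\mathbf{P})^{-1}z\|\le\sigma_{\min}(\widehat{\ma_1})\,\|\mi+\mathbf{P}\|_2$, since $\|(\mi+\mathbf{P})^{-1}z\|\ge 1/\|\mi+\mathbf{P}\|_2$. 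The remaining steps are routine.
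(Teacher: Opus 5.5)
Your proof is correct and is essentially the paper's argument. It uses the same factorization $\widehat{\ma_1}=\ma_1(\mi+\me\mr_s^{-1})^{-1}$ and the same two Banach-lemma bounds. Your test-vector bound $\sigma_{\min}(\widehat{\ma_1})\ge\sigma_{\min}(\ma_1)/\|\mi+\me\mr_s^{-1}\|_2$ is just the singular-value form of the paper's pseudo-inverse bound $\|\widehat{\ma_1}^{\dagger}\|_2\le\|\mi+\me\mr_s^{-1}\|_2\,\|\ma_1^{\dagger}\|_2$, which the paper gets from $\widehat{\ma_1}^{\dagger}=(\mi+\me\mr_s^{-1})\ma_1^{\dagger}$.

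One small caveat concerns your ``equivalently'' remark via $\widehat{\ma_1}^\top$. It only works if $x$ is the left singular vector for $\sigma_n(\widehat{\ma_1})$ and you note that $\widehat{\ma_1}$ and $\ma_1$ have the same range; minimizing $\|\widehat{\ma_1}^\top x\|_2$ over all unit $x\in\real^m$ gives $0$ when $m>n$. The explicit min--max argument you actually carry out is fine.
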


The assumption $\|\me\|_2\|\mr_s\inv\|_2 < 1$ holds whenever
$\kappa_2(\ma) \lesssim \vu^{-1} \approx 10^{16}$, as confirmed
by our experiments.

\begin{proof}
Since $\|\me\|_2\|\mr_s^{-1}\|_2<1$, the Banach lemma
\cite[Lemma 2.3.3]{GovL13} implies that $\mr_s+\me$ is nonsingular,
so that $\widehat{\ma_1}$ is well defined.
To relate $\widehat{\ma_1}$ to $\ma_1$, factor out $\mr_s^{-1}$,
\begin{align*}
\widehat{\ma_1}=\ma\mr_s^{-1}(\mi+\me\mr_s^{-1})^{-1}
=\ma_1(\mi+\me\mr_s^{-1})^{-1}.
\end{align*}
The Banach lemma also implies
\begin{align*}
\|\widehat{\ma_1}\|_2\leq
\frac{\|\ma_1\|_2}{1-\|\me\|_2\|\mr_s^{-1}\|_2}
=\frac{\|\ma_1\|_2}{1-\varepsilon\ \kappa_2(\mr_s)}.
\end{align*}
From $\widehat{\ma_1}$ having full column rank follows that its
left inverse equals
\begin{align*}
\widehat{\ma_1}^{\dagger}=(\mi+\me\mr_s^{-1})\ma_1^{\dagger}.
\end{align*}
Analogous to the above, we bound
\begin{align*}
\|\widehat{\ma_1}^{\dagger}\|_2&\leq
\|\ma_1^{\dagger}\|_2\ (1+\|\me\|_2\|\mr_s^{-1}\|_2)\\
&=\|\ma_1^{\dagger}\|_2\ (1+\varepsilon\ \kappa_2(\mr_s)).
\end{align*}
Combining gives
\begin{align*}
\kappa_2(\widehat{\ma_1})&\leq
\kappa_2(\ma_1)\ \frac{1+\varepsilon\ \kappa_2(\mr_s)}{1-\varepsilon\ \kappa_2(\mr_s)}.
\end{align*}
\end{proof}

\subsection{Adaptive Precision Selection}\label{sec:adapt}

The sketch precision $p$ is the only user-supplied parameter in
\texttt{MRCQR}, and only a \textit{coarse} classification of
$\kappa_2(\ma)$ into one of three regimes is required to set it:
$\kappa_2(\ma) < 10^4$ (use half), $10^4 \leq \kappa_2(\ma) < 10^8$
(use single), or $\kappa_2(\ma) \geq 10^8$ (use double). The precise
value of $\kappa_2(\ma)$ is never needed. In many practical settings
this regime is known from prior knowledge of the problem---for
instance, from the mesh size in a discretized PDE or the spectrum of
a covariance matrix---and no estimation is required (see
\cref{fig:variouscond} for the effect of an incorrect choice).

When the regime is not known a priori, several options are available.
The cheapest is a \emph{CholQR probe}: apply \cref{alg:cholqr} to
$\ma$ and invert the orthogonality deviation $q \equiv \|\mi -
\widehat{\mq}^\top\widehat{\mq}\|_2$ using the bound

\begin{equation}
\kappa_2(\ma) \geq \sqrt{\frac{q}{\vu(1+q)}},
\end{equation}
derived from~\cite[Theorem 2.2]{garrisonipsen}. This probe costs one \texttt{CholQR} call and is essentially free
when \texttt{CholQR} was already the intended first attempt. Alternatively, the block 1-norm estimator
of~\cite{doi:10.1137/S0895479899356080} applied to $\ma^\top\ma$
classifies the regime at $\mathcal{O}(n^2)$ cost, exploiting the
bound $\kappa_2(\ma)^2 \leq n\|\ma^\top\ma\|_1
\|(\ma^\top\ma)^{-1}\|_1$~\cite[Exercise 2.6]{IIbook}. Machine-learning-based estimators~\cite{carson2025condest} offer a
further avenue when a training distribution is available.
A full implementation of condition number estimation is beyond the
scope of this work; our experiments fix $\kappa_2(\ma)$ at
construction time. Developing an integrated, GPU-friendly estimator
is a natural direction for future work.

When QR is performed \emph{repeatedly} on a sequence of similar
matrices---such as the
block Arnoldi process~\cite{Saad2003}, randomized SVD via power
iteration~\cite{HMT11}, or time-stepping schemes---the conditioning
regime typically changes slowly or predictably. A single regime
classification then applies to many subsequent calls, amortizing its
cost to a negligible fraction of the total computation.

If the precision is underestimated, the deviation from orthonormality
will exceed $\mathcal{O}(\vu)$, which is immediately detectable by
checking $\|\mi - \widehat{\mq}^\top\widehat{\mq}\|_2$. The remedy is
to rerun \texttt{MRCQR} at the next higher precision level. This
self-correcting property means that an aggressive (low-precision)
initial guess carries no risk beyond one wasted factorization.

\section{Numerical Experiments}\label{sec:experiments}

\subsection{Experimental Setup}\label{sec:implement}

\texttt{MRCQR} is implemented in C\texttt{++}/CUDA using
vendor-optimized libraries: cuBLAS for GEMM and triangular solves,
cuSOLVER for QR and Cholesky factorizations, and cuFFT for
column-wise FFTs. Half-precision FFTs use the \texttt{cufftXt} API
(\texttt{CUDA\_R\_16F} input, \texttt{CUDA\_C\_16F} output), since
the standard \texttt{cufftExecR2C} does not expose a half-precision
interface. All results were obtained with CUDA~13.1 and GCC~11.5.0
on an NVIDIA H100 PCIe (80\,GB VRAM); reported times exclude
host--device transfers and are averaged over 5 trials.

We compare against three baselines: (1)~cuSOLVER \texttt{geqrf},
the standard industry-stable GPU QR baseline; (2)~\texttt{CholQR2}~\cite{YNYF2015},
implemented by calling our \texttt{CholQR} routine twice, included
for stability comparison; and
(3)~\texttt{rand-cholQR}~\cite{HSBY2023}, a state-of-the-art
randomized preconditioned \texttt{CholQR} using a two-level
sparse-plus-dense multisketch, provided by the authors and built
with Kokkos~5.1.1~\cite{Kokkos2022}.

Our implementation of MRCQR can be found at \textbf{\url{https://github.com/as1805/MRCQR}}.

\subsection{Numerical Stability}\label{sec:adaptive accuracy}

We assess the orthogonality and backward stability of \texttt{MRCQR}
against \texttt{geqrf} (the stable reference) and \texttt{CholQR2}
(known to break down for $\kappa_2(\ma) \gtrsim 10^8$~\cite{YNYF2015})
across a wide range of condition numbers.
\texttt{rand-cholQR} is omitted here as its stability is established
in~\cite{HSBY2023}.

We construct test matrices $\ma \in \rmn$ with prescribed condition
numbers via $\ma = \mU\boldsymbol{\Sigma}\mv^\top$, where $\mU$,
$\mv$ have orthonormal columns (obtained by QR of random matrices
with entries uniform on $[-1,1]$) and the singular values are
geometric:
\begin{equation}
    \sigma_i = \kappa^{\,1/2 - i/(n-1)}, \quad i = 0, \ldots, n-1,
\end{equation}
giving $\kappa_2(\ma) = \kappa$ exactly. We fix $m = 2^{17}$,
$n = 50$, and sweep $\kappa \in \{10^2, 10^3, \ldots, 10^{16}\}$.

\begin{figure}
    \centering
    \begin{subfigure}[b]{0.45\textwidth}
        \centering
        \includegraphics[width=.85\linewidth]{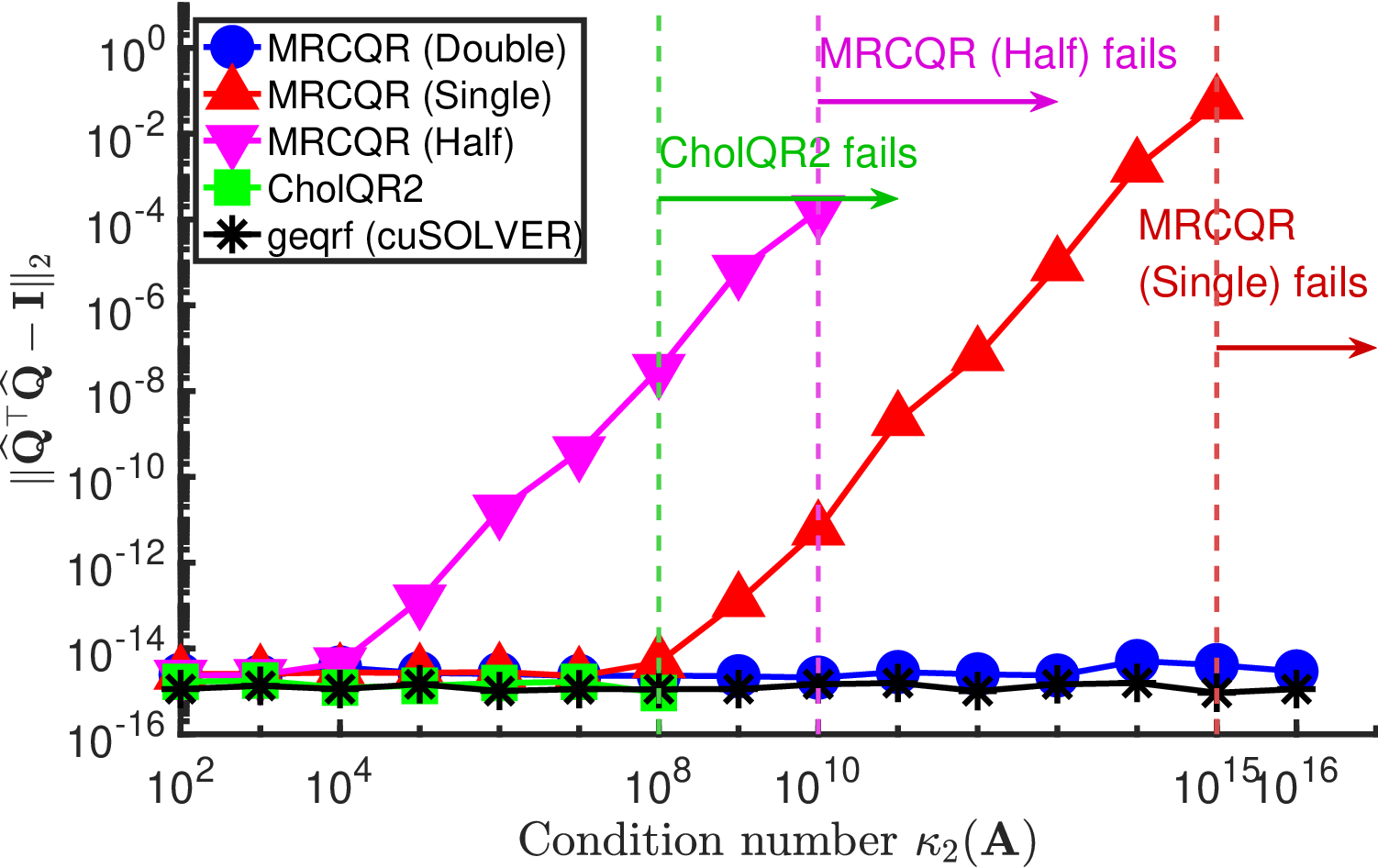}
        \caption{Deviation from orthonormality $\|\widehat{\mq}^\top\widehat{\mq} - \mi\|_2$
                 vs.\ $\kappa_2(\ma)$}
        \label{fig:accuracy_vs_cond}
    \end{subfigure}
    \begin{subfigure}[b]{0.45\textwidth}
        \centering
        \includegraphics[width=.85\linewidth]{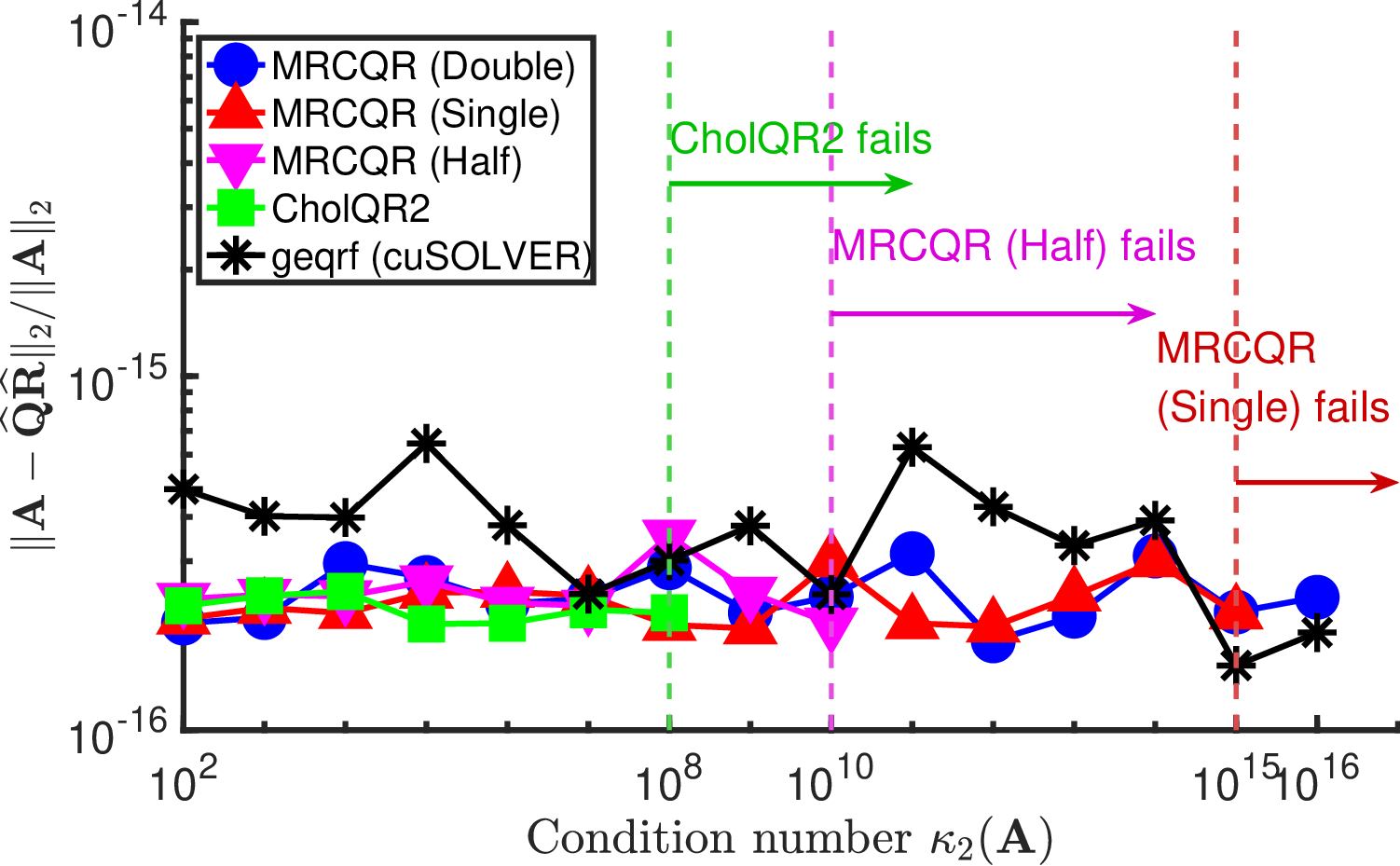}
        \caption{Relative residual $\|\ma - \widehat{\mq}\widehat{\mr}\|_2 / \|\ma\|_2$
                 vs.\ $\kappa_2(\ma)$}
        \label{fig:residual_vs_cond}
    \end{subfigure}
    \caption{\texttt{CholQR2} breaks down past $\kappa_2(\ma) = 10^8$
    (Gram matrix loses positive definiteness).
    \texttt{MRCQR} (half-precision sketch) breaks down past $\kappa_2(\ma) = 10^{10}$, and 
    \texttt{MRCQR} (single-precision sketch) breaks down past $\kappa_2(\ma) = 10^{15}$ (underflow of the
    smallest singular value of $\ma$).}
    \label{fig:variouscond}
\end{figure}

\Cref{fig:variouscond} shows deviation from orthonormality and
relative residual for all methods. \texttt{geqrf} achieves the gold standard of
$\|\mi - \widehat{\mq}^\top\widehat{\mq}\|_2 \approx 10^{-15}$
for all $\kappa$; \texttt{MRCQR} (double-precision sketch) matches
this at ${\approx} 5\times 10^{-15}$ across all tested condition numbers.
Confirming the thresholds of \cref{rscond}, \texttt{MRCQR}
(single-precision sketch) maintains $\mathcal{O}(\vu)$ orthogonality
for $\kappa_2(\ma) \lesssim 10^8$ and \texttt{MRCQR}
(half-precision sketch) for $\kappa_2(\ma) \lesssim 10^4$; beyond
these thresholds orthogonality degrades.
\texttt{CholQR2} breaks down past $\kappa_2(\ma) = 10^8$, consistent
with known analysis~\cite{YNYF2015}. Notably, \texttt{MRCQR}
(single-precision sketch) continues to produce a result beyond
$10^8$---albeit with degraded orthogonality---whereas \texttt{CholQR2}
fails entirely. This graceful degradation is useful in practice: a
user who underestimates $\kappa_2(\ma)$ receives a detectable warning
rather than a hard failure.
All methods that do not break down are backward stable
(\cref{fig:residual_vs_cond}).

\subsection{Runtime Performance}\label{sec:precisionchange}

We compare \texttt{MRCQR} (FP64, FP32, FP16 sketch) against
\texttt{rand-cholQR}~\cite{HSBY2023} and cuSOLVER \texttt{geqrf},
fixing $m = 2^{20}$ and varying $n$. All three methods are stable for $\kappa_2(\ma) \lesssim 10^{16}$,
making this a fair speed comparison. The sketch size for
\texttt{rand-cholQR} follows the authors'
recommendation~\cite[Section 6.2]{HSBY2023}:
$p_1 = \lceil 8.24(n^2+n)\rceil$ (CountSketch) and
$p_2 = \lceil 74.3\log(p_1)\rceil$ (Gaussian); note $p_1$ grows
quadratically in $n$, whereas \texttt{MRCQR} uses $c = 3n$.
Although the CountSketch is a sparse transform and is theoretically
efficient, realising this on GPU requires a highly optimized sparse
kernel; the Kokkos~\cite{Kokkos2022} implementation provided by the authors may not
reflect the full performance potential of \texttt{rand-cholQR}.

\begin{figure}
    \centering
    \begin{subfigure}[b]{0.45\textwidth}
        \centering
        \includegraphics[width=.85\linewidth]{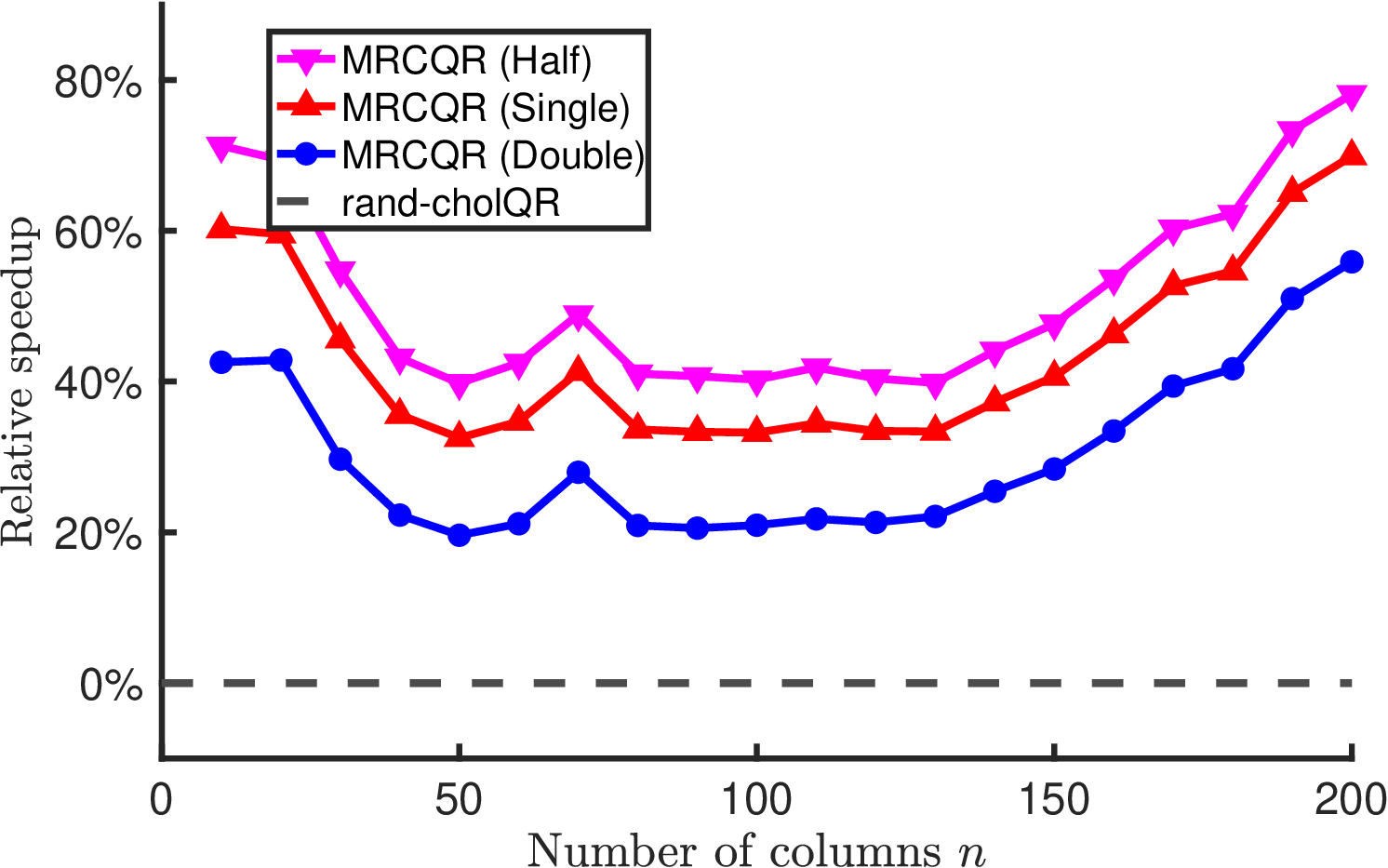}
        \caption{Speedup of \texttt{MRCQR} over \texttt{rand-cholQR} vs.\ $n$}
        \label{fig:perf_speedup}
    \end{subfigure}
    \begin{subfigure}[b]{0.45\textwidth}
        \centering
        \includegraphics[trim={0cm 0cm 0cm -1.5cm}, clip, width=.85\linewidth]{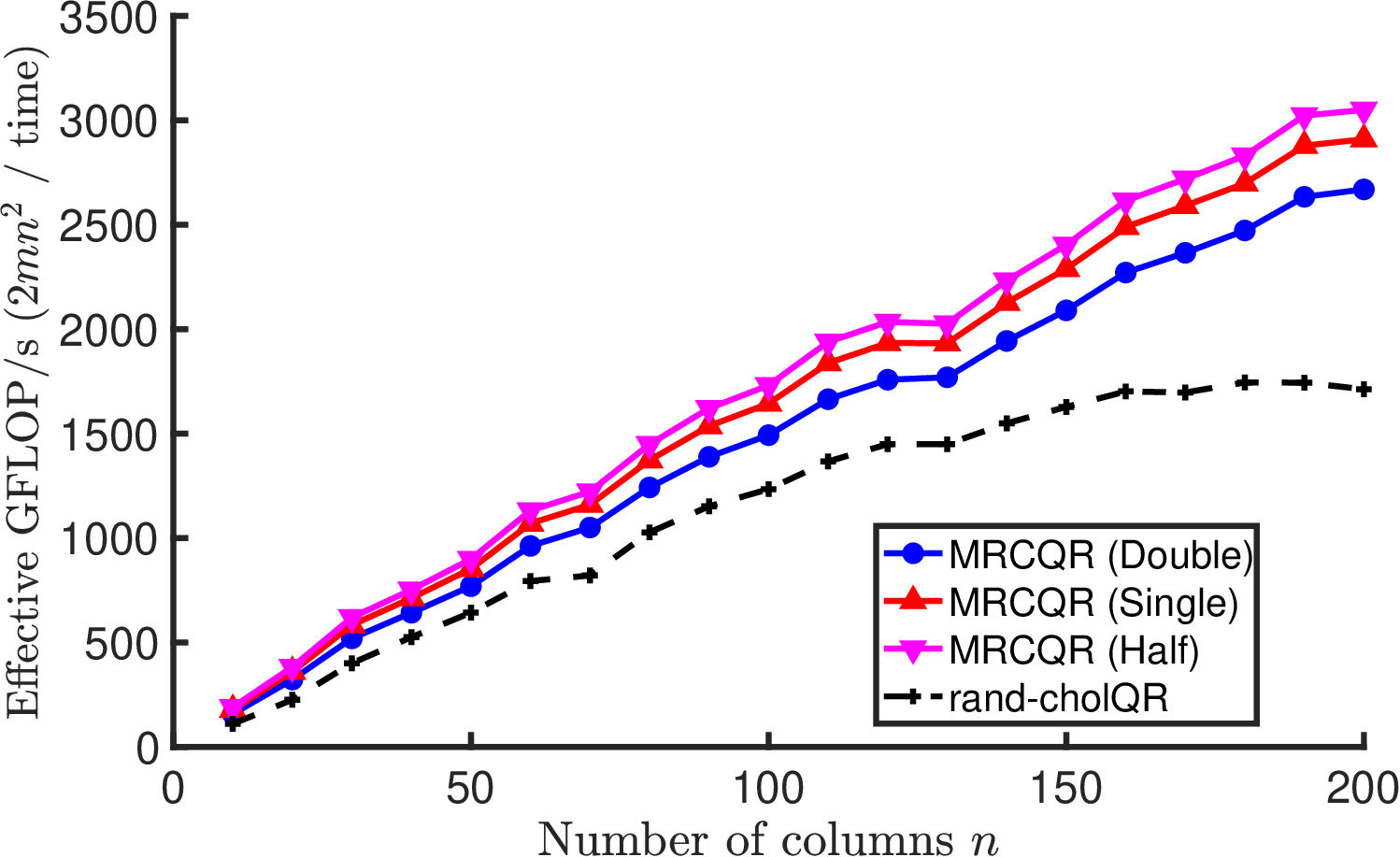}
        \caption{GFlop/s of \texttt{MRCQR} and \texttt{rand-cholQR} vs.\ $n$}
        \label{fig:perf_flops}
    \end{subfigure}
    \begin{subfigure}[b]{0.45\textwidth}
        \centering
        \includegraphics[trim={-3.5cm 0cm 2cm 0cm}, clip, width=\linewidth]{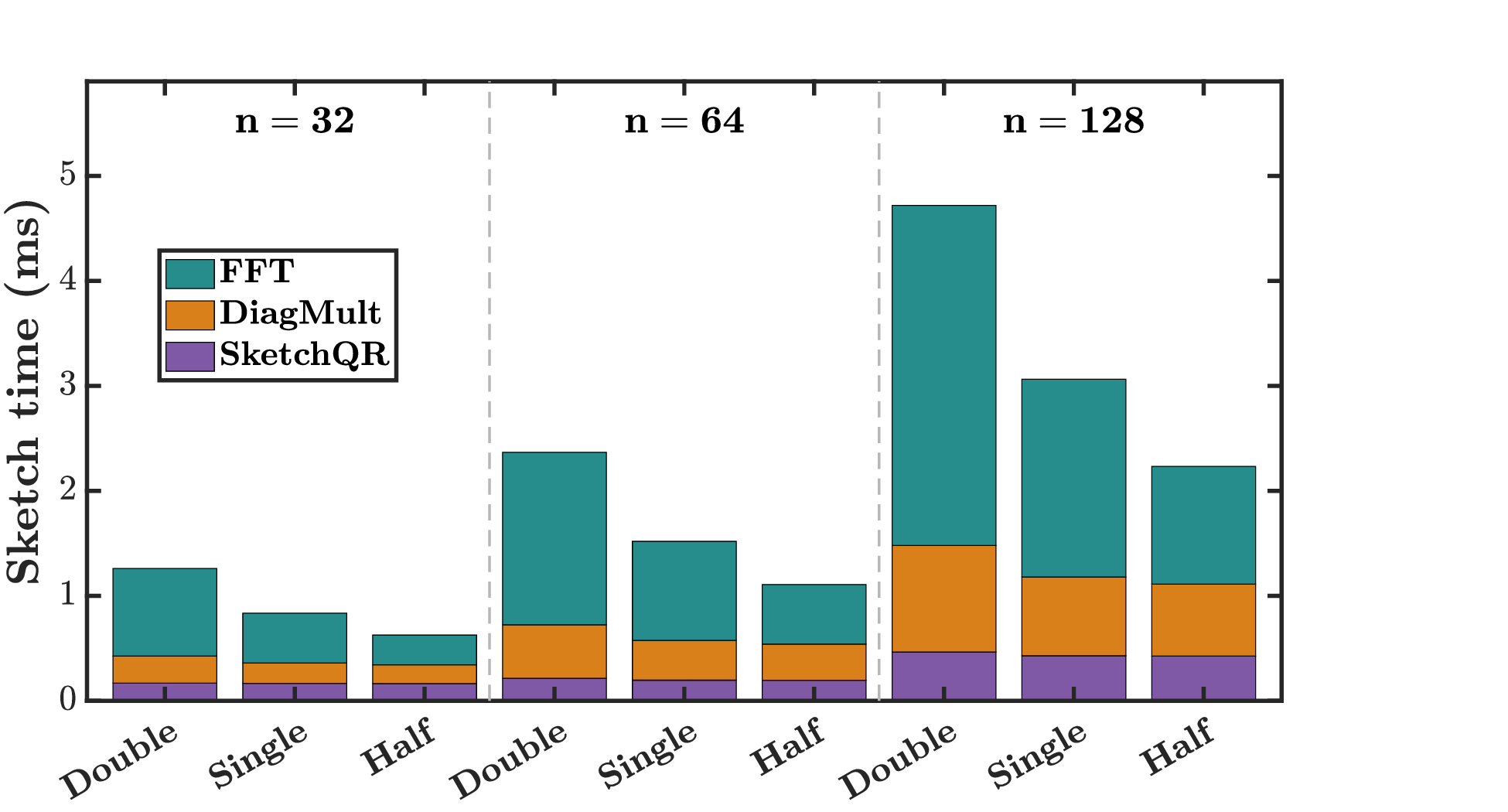}
        \caption{Sketch-phase timing breakdown across three precisions}
        \label{fig:sketch_precision_small}
    \end{subfigure}
    \caption{Performance of \texttt{MRCQR} vs.\ \texttt{rand-cholQR}~\cite{HSBY2023}
             on NVIDIA H100 GPU, $m = 2^{20}$, averaged over 5 trials.}
    \label{fig:perf}
\end{figure}

\Cref{fig:perf} illustrates three complementary aspects of
\texttt{MRCQR}'s performance.
\Cref{fig:perf_speedup} shows that all three precision variants
outperform \texttt{rand-cholQR}, with the gap increasing with $n$;
lower sketch precision yields greater speedup, confirming the
practical recommendation: use the lowest precision consistent with
$\kappa_2(\ma)$---half for $\kappa_2(\ma) \lesssim 10^4$,
single for $\kappa_2(\ma) \lesssim 10^8$, and double
otherwise.
\Cref{fig:perf_flops} shows the effective flop rates of
\texttt{MRCQR} and \texttt{rand-cholQR}: \texttt{MRCQR}
achieves substantially higher utilization, reflecting the benefit of
building on highly optimized vendor libraries (\texttt{cuFFT},
\texttt{cuBLAS}) rather than custom sparse kernels.
\Cref{fig:sketch_precision_small} shows the sketch-phase timing
breakdown versus $n$ for all three precisions; the ratios between
precision tiers remain approximately constant across $n$. The speedup from lower
precision is driven primarily by the FFT, which dominates the sketch
cost and achieves ${\sim}1.7\times$ per precision level.

\begin{table}[t]
\centering
\caption{Wall-clock time (ms) on H100, $m = 2^{20}$, 5 trials.
         Speedup = \texttt{rand-cholQR} or \texttt{geqrf} divided by
         \texttt{MRCQR}~(FP16).}
\label{tab:timing}
\setlength{\tabcolsep}{3pt}
\rowcolors{2}{gray!15}{white}
\begin{tabular}{c|ccc|c|c|cc}
\hline
\rowcolor{white}
 & \multicolumn{3}{c|}{\texttt{MRCQR}} & & & \multicolumn{2}{c}{Speedup vs.\ \texttt{MRCQR}~(FP16)} \\
\rowcolor{white}
$n$ & FP64 & FP32 & FP16 & \shortstack{\texttt{rand-}\\\texttt{cholQR}} & \texttt{geqrf} & \shortstack{vs \texttt{rand-}\\\texttt{cholQR}} & vs \texttt{geqrf} \\
\hline
 20  &  2.60 &  2.33 &  2.19 &  3.71 & 29.6 & 1.69$\times$ & 13.5$\times$ \\
 40  &  5.22 &  4.71 &  4.46 &  6.38 & 16.6 & 1.43$\times$ &  3.7$\times$ \\
 60  &  7.84 &  7.06 &  6.67 &  9.50 & 20.9 & 1.42$\times$ &  3.1$\times$ \\
 80  & 10.80 &  9.77 &  9.26 & 13.06 & 17.4 & 1.41$\times$ &  1.9$\times$ \\
100  & 14.05 & 12.75 & 12.11 & 16.99 & 23.0 & 1.40$\times$ &  1.9$\times$ \\
120  & 17.18 & 15.62 & 14.84 & 20.84 & 28.2 & 1.40$\times$ &  1.9$\times$ \\
140  & 21.15 & 19.32 & 18.41 & 26.53 & 36.8 & 1.44$\times$ &  2.0$\times$ \\
160  & 23.64 & 21.56 & 20.53 & 31.54 & 41.0 & 1.54$\times$ &  2.0$\times$ \\
180  & 27.48 & 25.18 & 24.00 & 38.94 & 53.5 & 1.62$\times$ &  2.2$\times$ \\
200  & 31.43 & 28.83 & 27.50 & 48.98 & 50.5 & 1.78$\times$ &  1.8$\times$ \\
\hline
\end{tabular}
\end{table}

\Cref{tab:timing} compares \texttt{MRCQR} (three precisions),
\texttt{rand-cholQR}, and cuSOLVER's \texttt{geqrf} on the H100.
Three observations stand out.
First, \texttt{MRCQR} (FP16) is $1.4$--$1.8\times$ faster than the
provided \texttt{rand-cholQR} implementation across all tested $n$,
with the gap widening as $n$ grows. This reflects not algorithmic
superiority but the ease of implementing \texttt{MRCQR} efficiently:
its SRTT-based sketch maps directly onto highly optimized vendor
libraries (\texttt{cuFFT}), requiring no custom sparse GPU kernel.
Second, the speedup of \texttt{MRCQR} (FP16) over \texttt{geqrf} is
$13.5\times$ at $n=20$: \texttt{geqrf} incurs a large fixed kernel
launch and synchronization overhead that dominates for small $n$,
while \texttt{MRCQR}'s SRTT-based sketch avoids this entirely.
At larger $n$ ($80$--$200$), where both methods are dominated by
$\mathcal{O}(mn^2)$ work, the speedup stabilizes at
$1.8$--$2.2\times$, reflecting \texttt{MRCQR}'s BLAS-3 efficiency
advantage.
Third, the FP16 sketch is ${\approx}2\times$ faster than FP64 at no accuracy
cost (for $\kappa_2(\ma) \lesssim 10^4$), confirming the
mixed-precision benefit quantified in \cref{tab:sketch_precision_n64}.

\begin{table}[t]
\centering
\caption{Sketch sub-operation timings (ms) at $n=64$, $m=2^{20}$.
	Ratios across precisions are approximately $n$-independent.}
\label{tab:sketch_precision_n64}
\setlength{\tabcolsep}{4pt}
\begin{tabular}{c|ccc|c}
\hline
Precision & DiagMult & FFT & SketchQR & Total \\
\hline
FP64 & 0.511 & 1.642 & 0.214 & 2.367 \\
FP32 & 0.379 & 0.943 & 0.197 & 1.518 \\
FP16 & 0.344 & 0.568 & 0.196 & 1.107 \\
\hline
FP64/FP32 & 1.35$\times$ & 1.74$\times$ & 1.09$\times$ & 1.56$\times$ \\
FP32/FP16 & 1.10$\times$ & 1.66$\times$ & 1.01$\times$ & 1.37$\times$ \\
\hline
\end{tabular}
\end{table}

\Cref{tab:sketch_precision_n64} breaks down sketch-phase timings at
$n=64$; the per-precision speedup ratios are approximately
$n$-independent. The FFT dominates sketch cost and achieves
${\sim}1.7\times$ speedup per precision level. DiagMult achieves
more modest gains (${\sim}1.35\times$ FP64$\to$FP32,
${\sim}1.10\times$ FP32$\to$FP16) because $\ma$ must always be read
in FP64, limiting bandwidth savings. SketchQR achieves only
${\sim}1.09\times$ speedup from FP64 to FP32 and no further gain
from FP32 to FP16, for the reasons discussed in \cref{sec:precond}.

\begin{table}[t]
\centering
\caption{Cost breakdown of \texttt{MRCQR} at $m=2^{20}$, 10 trials
         (ms). TRSM$\times$2 = TrsmPre\,+\,TrsmFin; GEMM = Gram
         matrix; Sketch = full SRTT phase. TRSM and GEMM costs are
         identical across precisions; only Sketch changes.}
\label{tab:breakdown}
\setlength{\tabcolsep}{4pt}
\begin{tabular}{c|cc|ccc}
\hline
$n$ & TRSM$\times$2 & GEMM & Sk.\ FP64 & Sk.\ FP32 & Sk.\ FP16 \\
\hline
 16 &  0.93 & 0.12 & 0.62 & 0.40 & 0.29 \\
 32 &  2.05 & 0.17 & 1.26 & 0.84 & 0.63 \\
 64 &  5.00 & 0.29 & 2.37 & 1.52 & 1.11 \\
128 & 11.65 & 0.68 & 4.72 & 3.06 & 2.23 \\
\hline
\end{tabular}
\end{table}

\Cref{tab:breakdown} reveals the cost structure of \texttt{MRCQR}.
Three points stand out.
First, the two triangular solves (TRSM) dominate the runtime: TRSM
accounts for 56\% of total time at $n=16$, growing to 74\% at
$n=128$. This is perhaps surprising given that TRSM is a BLAS-3
operation, but for tall-and-skinny matrices it is memory-bandwidth
bound---the sequential wavefront dependency limits GPU
parallelism---giving near-linear scaling in $n$.
Second, GEMM is negligible at these column counts (0.12--0.68\,ms),
confirming that the compute-intensive Gram matrix formation is not
the bottleneck in this regime.
Third, the Sketch cost is the only quantity that changes with
precision: FP16 reduces sketch cost by ${\approx}2\times$ over FP64
at each $n$, while TRSM and GEMM remain unchanged. This directly
confirms that mixed-precision sketching targets exactly the right
part of the algorithm.

\section{Conclusions}
\texttt{MRCQR} is a numerically stable GPU algorithm for the thin QR
factorization of tall-and-skinny matrices, achieving
$\|\mi - \widehat{\mq}^\top\widehat{\mq}\|_2 = \mathcal{O}(\vu)$
for $\kappa_2(\ma)$ up to $10^{16}$---far beyond the $10^8$ stability
limit of \texttt{CholQR2}. The central insight is that the SRTT-based
preconditioner only needs to reduce the effective condition number to
$\mathcal{O}(1)$, not achieve full double-precision accuracy: FP32
suffices when $\kappa_2(\ma) \lesssim 10^8$ and FP16 when
$\kappa_2(\ma) \lesssim 10^4$, exactly matching the predicted
thresholds. On an NVIDIA H100, \texttt{MRCQR} (FP16) is
$1.8$--$13.5\times$ faster than cuSOLVER \texttt{geqrf} and
$1.4$--$1.8\times$ faster than \texttt{rand-cholQR}, with no custom
GPU kernels. Profiling reveals that TRSM dominates at 56--74\% of
total runtime while the sketch accounts for only 10--20\%, so
tensor-core-based TRSM reformulation~\cite{Carrica2025} is the most
promising direction for further speedup. Precision selection is easily
determined from problem structure or a cheap \texttt{CholQR} probe,
making \texttt{MRCQR} practical in production HPC workflows.

\bibliography{bib-2}

\end{document}